\documentclass[11pt]{amsart}
\usepackage[T1]{fontenc}
\usepackage{lmodern}
\usepackage[utf8]{inputenc}
\usepackage{amsmath,amssymb,amsthm}
\usepackage{mathtools}
\usepackage{booktabs}
\usepackage[margin=1.1in]{geometry}
\usepackage{placeins}
\usepackage{pgfplots}
\pgfplotsset{compat=1.18}
\definecolor{copper}{RGB}{184,115,51}
\definecolor{lightblue}{RGB}{70,140,210}
\usepackage{hyperref}

\DeclareMathOperator{\rad}{rad}

\theoremstyle{plain}
\newtheorem{theorem}{Theorem}[section]
\newtheorem{proposition}[theorem]{Proposition}
\newtheorem{lemma}[theorem]{Lemma}
\newtheorem{corollary}[theorem]{Corollary}
\newtheorem{conjecture}[theorem]{Conjecture}

\theoremstyle{definition}
\newtheorem{definition}[theorem]{Definition}
\newtheorem{example}[theorem]{Example}

\theoremstyle{remark}

\numberwithin{equation}{section}

\begin{document}

\title{Pasten's arithmetic derivatives and the decomposition into gains}

\author{R. Laniewski}
\author{K. M\"uller}

\date{October 19, 2026}

\subjclass[2020]{Primary 11D75; Secondary 11J97, 11H06, 11C08}

\keywords{$abc$ conjecture, arithmetic derivative, Wronskian, Mason--Stothers theorem, quality, approximation gain, power gain}

\begin{abstract}\label{abs:pasten}
Pasten attached to each coprime triple $a+b=c$ a lattice of arithmetic derivations that obey the Leibniz rule and are additive on this one equation. An arithmetic Wronskian then gives an $abc$-type inequality governed by the norm $\|\psi\|$ of the derivation, and Pasten showed that sufficiently small derivations exist under the $abc$ conjecture and give an $abc$ inequality with a fixed exponent.

We determine exactly which values this Wronskian takes, and we show that its estimate with exact prime weights has the best possible coefficient. The quality $X=\log c/\log\operatorname{rad}(abc)$ factors into an approximation gain and a power gain for each chosen presentation of the triple, and the resulting inequality bounds $X$ and both gains. For coprime polynomials the same argument gives $G_a < 1$ and $G_p < 3$. We also show that the largest absolute value of the derivation on $a$, $b$ and $c$ exceeds $c/\operatorname{rad}(abc)$ whenever the derivation is additive on the triple with non-zero Wronskian, so that it gives no bound on the quality by itself, and we determine its minimum when $a=1$. For Fermat triples the presentation by bases has approximation gain greater than $n/3$, which recovers Pasten's conditional asymptotic Fermat theorem. Finally, for five triples we certify the minimal norms and find the inequality within $0.08$ of the quality, with equality for the triples $1+80=81$ and $1+2400=2401$.
\end{abstract}

\maketitle

\section{Introduction}\label{sec:intro}

The proof of the Mason--Stothers theorem for polynomials rests on the derivative~\cite{Mason1984,Stothers1981,Snyder2000}. Its arithmetic counterpart is the $abc$ conjecture of Masser and Oesterl\'e~\cite{Oesterle1988Fermat}, which asserts that for every $\varepsilon>0$ only finitely many coprime triples $a+b=c$ satisfy $c>\rad(abc)^{1+\varepsilon}$. Over $\mathbb{Z}$ there is no derivative of the same kind, since every additive map that obeys the Leibniz rule vanishes. Following an observation of Pheidas that additivity is needed only finitely many times, Pasten constructed, for a given coprime triple $a+b=c$, derivatives that obey the Leibniz rule and are additive on this one equation~\cite{Pasten2022}. Their size is measured by a norm $\|\psi\|$, the largest absolute value taken on the primes. Pasten proved an $abc$-type inequality in which $\|\psi\|$ appears, showed by geometry of numbers that such derivatives exist with controlled norm, and proved that sufficiently small derivatives exist under the $abc$ conjecture and give an $abc$ inequality with a fixed exponent.

The present note places Pasten's inequality in the quality plane of Laniewski--M{\"u}ller~\cite{LaniewskiMuller2026}. In this plane the quality $X$ and the weak quality $Y$ of a triple serve as coordinates, and a chosen presentation $P$ of the triple factors the quality into an approximation gain and a power gain, following M\"uller and Taktikos~\cite{MullerTaktikos2026} and de Weger~\cite{deWeger2026}. Section~\ref{sec:plane} recalls these notions. Section~\ref{sec:polynomial} treats polynomials, where the Wronskian proof of the Mason--Stothers theorem gives $G_a<1$ and $G_p<3$ for every presentation. Section~\ref{sec:pasten} recalls Pasten's construction, determines the exact image and the optimal norm coefficient of the Wronskian, and states the inequality as a bound on $X$, from which the bounds on the two gains follow for every presentation. Under Pasten's small derivatives conjecture with exponent $\eta$, the quality is then asymptotically at most $1/(1-\eta)$.

Section~\ref{sec:norms} compares Pasten's norm with the largest of $|d^{\psi}a|$, $|d^{\psi}b|$ and $|d^{\psi}c|$. This maximum exceeds $c/\rad(abc)$ for every derivation that is additive on the triple and has non-zero Wronskian, and for $a=1$ its minimum is determined by divisibility. Section~\ref{sec:fermat} shows that for a Fermat triple the presentation by bases has approximation gain greater than $n/3$, which recovers the bound $n\le 3/(1-\eta)$ outside finitely many Fermat triples, as in Pasten's conditional form of the asymptotic Fermat theorem. Section~\ref{sec:data} gives minimal derivatives for five triples with certificates of minimality, and evaluates for them the inequality with exact divisibility and prime weights.

\section{The quality plane and presentations}\label{sec:plane}

Throughout, a \emph{triple} is a triple $(a,b,c)$ of coprime positive integers with $a+b=c$ and $a<b$, and we write $R=\rad(abc)$. Its \emph{quality}, \emph{weak quality} and \emph{balance} are
\[
X=\frac{\log c}{\log R},\qquad Y=\frac{\log(abc)}{\log R},\qquad \lambda=\frac{Y}{X}=\frac{\log(abc)}{\log c}.
\]
Since $a<c$ and $b<c$, one has $\lambda<3$ for every triple. If $a\ge 2$, then $ab\ge 2b>c$, whence $\lambda>2$~\cite[Lemma~2.2]{LaniewskiMuller2026}.

\begin{definition}\label{def:presentation}
A \emph{presentation} of a triple is a positive integer $P$ with $P\mid abc$ and $\rad(P)=R$. Its \emph{approximation gain}, \emph{power gain} and \emph{exponent factor} are
\[
G_a=\frac{\log c}{\log P},\qquad G_p=\frac{\log P}{\log R},\qquad f=\frac{\log(abc)}{\log P}.
\]
\end{definition}

Since $R=\rad(P)$ divides $P$, one has $R\le P\le abc$, and these quantities satisfy
\begin{equation}\label{eq:dictionary}
G_aG_p=X,\qquad fG_p=Y,\qquad 1\le G_p\le Y,\qquad G_a\le X
\end{equation}
for every presentation~\cite[Proposition~4.2]{LaniewskiMuller2026}. A bound on the quality therefore bounds the approximation gain of every presentation, and a bound on the weak quality bounds every power gain. The two extreme presentations are the radical $P=R$, for which $G_a=X$ and $G_p=1$, and the product $P=abc$, for which $G_p=Y$ and $f=1$.

\section{The polynomial model}\label{sec:polynomial}

Let $A$, $B$, $C$ be coprime polynomials in $\mathbb{C}[t]$ with $A+B=C$, not all constant. Put
\[
N=\max\{\deg A,\deg B,\deg C\},\qquad D=\deg\rad(ABC),\qquad T=\deg(ABC)
\]
where the radical $\rad(F)$ of a non-zero polynomial $F$ is the monic polynomial whose roots are the distinct roots of $F$, each with multiplicity one, so that $D$ is the number of distinct roots of $ABC$. The \emph{Wronskian} of the triple is $W=AB'-BA'$. From $A'+B'=C'$ one obtains
\begin{equation}\label{eq:poly-wronskian}
W=AB'-BA'=AC'-CA'=CB'-BC'.
\end{equation}

\begin{lemma}\label{lem:poly-divisibility}
The Wronskian $W$ is non-zero, and $ABC/\rad(ABC)$ divides $W$.
\end{lemma}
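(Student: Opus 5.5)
The plan is to prove the two claims separately: non-vanishing of $W$ from coprimality, and divisibility root by root using the three expressions for $W$ in \eqref{eq:poly-wronskian}.

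\textbf{Non-vanishing.} Suppose $W=0$. If $A=0$, then coprimality forces $B$ and $C$ to be non-zero constants, contradicting the standing assumption, and likewise if $B=0$. So assume $A,B\neq 0$. Then $W=AB'-BA'=0$ says that $(B/A)'=0$ in $\mathbb{C}(t)$, so $B=\kappa A$ for some constant $\kappa\neq 0$. Since $\gcd(A,B)=1$, $A$ must be constant, and hence so are $B$ and $C=A+B$. This contradicts the hypothesis that not all three are constant, so $W\neq 0$.

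\textbf{Divisibility.} We have
\[
\frac{ABC}{\rad(ABC)}=\prod_{\alpha}(t-\alpha)^{m_\alpha-1},
\]
where $\alpha$ runs over the distinct roots of $ABC$ and $m_\alpha$ is the multiplicity of $\alpha$ in $ABC$. Because $A,B,C$ are pairwise coprime (a common factor of two of them divides the third via $A+B=C$), each root $\alpha$ lies in exactly one of $A$, $B$, $C$, and $m_\alpha$ is its multiplicity there. It therefore suffices to show that $(t-\alpha)^{m-1}\mid W$ whenever $\alpha$ is a root of multiplicity $m$ of one of the three polynomials.

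Take the representation of $W$ adapted to that polynomial. If $\alpha$ is a root of $A$, use $W=AB'-BA'$: here $(t-\alpha)^m\mid A$ and $(t-\alpha)^{m-1}\mid A'$, so $(t-\alpha)^{m-1}$ divides both terms. If $\alpha$ is a root of $B$, the same form works with the roles of $A$ and $B$ swapped. If $\alpha$ is a root of $C$, use $W=CB'-BC'$. The factors $(t-\alpha)^{m_\alpha-1}$ for distinct $\alpha$ are pairwise coprime, so their product divides $W$, which proves the lemma.

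The only point needing care is the case analysis when one of $A$, $B$ is zero or constant in the non-vanishing step; the divisibility step is routine.
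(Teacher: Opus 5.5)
Your proof is correct and follows the paper's argument. Non-vanishing comes from $(B/A)'=0$ together with coprimality, and divisibility is proved root by root using the form of $W$ in \eqref{eq:poly-wronskian} adapted to the polynomial containing the root. Two differences are immaterial: you use $CB'-BC'$ for roots of $C$ where the paper uses $AC'-CA'$, and you treat the case $A=0$ or $B=0$ explicitly, which the paper leaves implicit.
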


\begin{proof}
If $W=0$, then $(A/B)'=0$, so that $A$ is a constant multiple of $B$. Coprimality then forces $A$ and $B$ to be constant, and $C$ as well, which is excluded. A root of $A$ of multiplicity $m$ is a root of $A'$ of multiplicity $m-1$, hence a root of $AB'-BA'$ of multiplicity at least $m-1$. The same holds for the roots of $B$, and for the roots of $C$ through the form $AC'-CA'$ of~\eqref{eq:poly-wronskian}. Since $A$, $B$ and $C$ are coprime, the claim follows.
\end{proof}

\begin{theorem}[Mason--Stothers]\label{thm:mason}
One has $N\le D-1$.
\end{theorem}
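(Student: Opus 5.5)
The plan is to compare degrees in the divisibility of Lemma~\ref{lem:poly-divisibility}. Since $W\neq 0$ and $ABC/\rad(ABC)$ divides $W$, we get
\[
T-D=\deg\bigl(ABC/\rad(ABC)\bigr)\le \deg W .
\]
What remains is an upper bound for $\deg W$ in terms of the degrees of the two polynomials in a suitable form of~\eqref{eq:poly-wronskian}. We then combine the two inequalities.

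For the upper bound, we choose the form of the Wronskian that avoids the polynomial of largest degree. Since $A+B=C$, at least two of $\deg A,\deg B,\deg C$ equal $N$. Let $F,G$ be the two polynomials other than one of top degree $N$, so that $W=\pm(FG'-GF')$ by~\eqref{eq:poly-wronskian}. If both $F$ and $G$ are non-constant, each term $FG'$ and $GF'$ has degree at most $\deg F+\deg G-1$, so $\deg W\le \deg F+\deg G-1=T-N-1$. If one of them, say $F$, is a non-zero constant, then $W=\pm FG'$ has degree $\deg G-1=T-N-1$ exactly. Both cannot be constant, since then all three would be constant. In every case $\deg W\le T-N-1$.

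Putting the two bounds together gives $T-D\le T-N-1$, that is, $N\le D-1$.

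The only delicate step is choosing the pairwise form of $W$ so that the term of degree $N$ drops out. This uses the observation that two of the three degrees equal $N$, which follows from $A+B=C$. The other point to check is the case where one of the polynomials is constant, since then the estimate $\deg(F')=\deg F-1$ fails for that polynomial. Both points are settled by the case split above, so I expect no real obstacle.
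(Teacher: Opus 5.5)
Your proof is correct and follows the paper's argument exactly. It takes the lower bound $T-D\le\deg W$ from Lemma~\ref{lem:poly-divisibility} and the upper bound $\deg W\le T-N-1$ from the form of~\eqref{eq:poly-wronskian} that omits a polynomial of degree $N$, with your explicit case split for a constant polynomial playing the role of the paper's convention $\deg 0=-\infty$; your remark that two of the degrees equal $N$ is true but not needed.
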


\begin{proof}
Choose in~\eqref{eq:poly-wronskian} the form of $W$ that omits a polynomial of degree $N$. In this form each of the two products has degree at most the sum of the two remaining degrees minus one, with the convention $\deg 0=-\infty$ when one of the two remaining polynomials is constant, so that $\deg W\le T-N-1$. Lemma~\ref{lem:poly-divisibility} gives $T-D\le\deg W$, and the two inequalities combine to $N\le D-1$.
\end{proof}

\begin{definition}\label{def:poly-presentation}
A \emph{presentation} of the polynomial triple is a monic polynomial $P$ dividing $ABC$ with $\rad(P)=\rad(ABC)$. Its approximation gain and power gain are $G_a=N/\deg P$ and $G_p=\deg P/D$.
\end{definition}

Writing $H=\deg P$, one has $D\le H\le T$, and the gains satisfy $G_aG_p=X=N/D$, with weak quality $Y=T/D$.

\begin{corollary}\label{cor:poly-gains}
Every presentation of a polynomial triple satisfies $G_a<1$ and $G_p<3$.
\end{corollary}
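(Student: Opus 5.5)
We will prove the two bounds separately, each from an elementary degree inequality. The approximation gain uses Theorem~\ref{thm:mason}; the power gain uses only the structure of the equation $A+B=C$. Throughout we write $H=\deg P$. We note $D\ge 1$: if $ABC$ had no roots, all three polynomials would be constant, which is excluded.

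For $G_a$, the presentation satisfies $\rad(P)=\rad(ABC)$, so every root of $ABC$ is a root of $P$. Hence $H\ge D$. Theorem~\ref{thm:mason} gives $N\le D-1<D$. Therefore $N<D\le H$, and so $G_a=N/H<1$. One point needs care: $H\ge D\ge 1>0$, so the quotient is defined.

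For $G_p$, the presentation divides $ABC$, so $H\le T=\deg A+\deg B+\deg C\le 3N$. It then suffices to show $3N<3D$, that is $N<D$, which is again Theorem~\ref{thm:mason}. Explicitly,
\[
G_p=\frac{H}{D}\le\frac{T}{D}\le\frac{3N}{D}\le\frac{3(D-1)}{D}<3 .
\]
A sharper estimate is available, since the two largest degrees among $A$, $B$, $C$ are equal and one gets $T\le 2N+\min$. It is not needed here.

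No step is a real obstacle once Theorem~\ref{thm:mason} is available. The only points to check are the strictness of the inequalities and the non-degeneracy $D\ge1$, both settled above. The main content is the observation that the two bounds on a presentation, $D\le H\le T$, combine with $N\le D-1$ and $T\le 3N$ to give both strict inequalities.
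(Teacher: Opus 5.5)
Your proof is correct and follows the paper's argument exactly: $G_a=N/H\le N/D<1$ by Theorem~\ref{thm:mason}, and $G_p=H/D\le T/D\le 3N/D<3$ using $T\le 3N$. Your explicit check that $D\ge 1$, so that the quotients are defined, is a small point the paper leaves implicit.
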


\begin{proof}
By Theorem~\ref{thm:mason}, $G_a=N/H\le N/D<1$. Since each of $A$, $B$ and $C$ has degree at most $N$, one has $T\le 3N$, and therefore $G_p=H/D\le T/D\le 3N/D<3$.
\end{proof}

In the polynomial case the quality and the weak quality thus satisfy $X<1$ and $Y<3$, and these bounds pass to every presentation. Both come from the drop in degree under differentiation, which the inequality $\deg W\le T-N-1$ expresses.

\section{Pasten's arithmetic derivatives}\label{sec:pasten}

We recall the construction of~\cite{Pasten2022}. Let $\Omega$ be the free abelian group on symbols $\xi_p$, one for each prime $p$, and define $d\colon\mathbb{Z}\to\Omega$ by $d0=0$ and
\[
dn=n\sum_{p\mid n}\frac{v_p(n)}{p}\,\xi_p
\]
for $n\ne 0$. This map obeys the Leibniz rule. A \emph{derivation} is a homomorphism $\psi\colon\Omega\to\mathbb{Z}$ with finite norm $\|\psi\|=\sup_p|\psi(\xi_p)|$, and its \emph{arithmetic derivative} is $d^{\psi}=\psi\circ d\colon\mathbb{Z}\to\mathbb{Z}$. For a triple $(a,b,c)$, let $\mathcal{T}(a,b)$ be the group of derivations supported on the primes dividing $abc$ and satisfying $d^{\psi}a+d^{\psi}b=d^{\psi}c$. This single additivity condition is one homogeneous linear equation in the values $\psi(\xi_p)$, and $\mathcal{T}(a,b)$ has rank $\omega(abc)-1$~\cite[Lemma~2.4]{Pasten2022}.

The \emph{Wronskian} of $\psi$ is $W^{\psi}=a\,d^{\psi}b-b\,d^{\psi}a$, and $a$ and $b$ are \emph{$\psi$-independent} when $W^{\psi}\ne 0$. For $\psi\in\mathcal{T}(a,b)$, additivity gives, in analogy with~\eqref{eq:poly-wronskian},
\begin{equation}\label{eq:int-wronskian}
W^{\psi}=a\,d^{\psi}b-b\,d^{\psi}a=a\,d^{\psi}c-c\,d^{\psi}a=c\,d^{\psi}b-b\,d^{\psi}c
\end{equation}
and the definition of $d$ gives
\begin{equation}\label{eq:wronskian-sum}
W^{\psi}=ab\Bigl(\sum_{p\mid b}\frac{v_p(b)}{p}\psi(\xi_p)-\sum_{p\mid a}\frac{v_p(a)}{p}\psi(\xi_p)\Bigr).
\end{equation}

\begin{lemma}\label{lem:int-divisibility}
Put $g(1)=0$ and
\begin{equation}\label{eq:evaluation-gcd}
g(n)=\gcd_{p\mid n}\frac{n\,v_p(n)}{p}\qquad(n\ge 2).
\end{equation}
Write $g_a=g(a)$, $g_b=g(b)$ and $g_c=g(c)$, and put
\begin{equation}\label{eq:wronskian-step}
\Delta=\frac{\gcd(c\,g_ag_b,\ b\,g_ag_c,\ a\,g_bg_c)}{\gcd(g_a,g_b,g_c)}.
\end{equation}
Then the Wronskians of the derivations in $\mathcal{T}(a,b)$ form exactly the group
\begin{equation}\label{eq:wronskian-image}
\{W^{\psi}\colon\psi\in\mathcal{T}(a,b)\}=\Delta\mathbb{Z}
\end{equation}
and $abc/R$ divides $\Delta$. Here $\Delta$ is well defined, since $b\ge 2$ gives $g_b\ge 1$ and hence $\gcd(g_a,g_b,g_c)\ge 1$. Moreover, every positive integer $n$ satisfies $\sum_{p\mid n}v_p(n)/p\le\log n/(2\log 2)$.
\end{lemma}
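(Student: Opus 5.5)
The plan is to reduce the computation of the image to the image of a linear form on the kernel of another linear form on a rank-$3$ lattice, and to read off the generator from $2\times 2$ minors. For $\psi$ supported on the primes of $abc$, write $x_p=\psi(\xi_p)$. Since $a$, $b$, $c$ are pairwise coprime, the values
\[
u=d^{\psi}a=\sum_{p\mid a}\tfrac{a v_p(a)}{p}x_p,\qquad v=d^{\psi}b,\qquad w=d^{\psi}c
\]
depend on disjoint sets of variables. Hence, as $\psi$ ranges over all derivations supported on the primes of $abc$, the triple $(u,v,w)$ ranges exactly over $M=g_a\mathbb{Z}\times g_b\mathbb{Z}\times g_c\mathbb{Z}$, by definition of $g$ as a gcd (with $g(1)=0$ covering $a=1$). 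The condition $\psi\in\mathcal{T}(a,b)$ becomes $(u,v,w)\in K=\ker s$, where $s(u,v,w)=u+v-w$. The Wronskian is $\varphi(u,v,w)=av-bu$, by~\eqref{eq:int-wronskian}. So the image is $\varphi(K)$.

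Next I will prove a general fact. Let $M$ be a free group of rank $3$ and $s,\varphi\colon M\to\mathbb{Z}$ homomorphisms with $s\neq 0$. Then $\varphi(\ker s)=(m/G)\mathbb{Z}$, where $G$ generates $s(M)$ and $m$ is the gcd of the $2\times 2$ minors of the $2\times 3$ matrix of $(s,\varphi)$ in a basis. Both $G$ and $m$ are invariant under a unimodular change of basis, since minors transform by the second compound matrix. After such a change we may assume $s=(G,0,0)$. Then $\ker s$ is spanned by $e_2,e_3$, so $\varphi(\ker s)=\gcd(\varphi_2,\varphi_3)\mathbb{Z}$, and the minors have gcd $G\gcd(\varphi_2,\varphi_3)$. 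When $a=1$ the first basis vector degenerates. I will then work in the rank-$2$ lattice $\{0\}\times g_b\mathbb{Z}\times g_c\mathbb{Z}$, where the same argument applies and produces the same formula with $g_a=0$.

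In the basis $(g_a,0,0),(0,g_b,0),(0,0,g_c)$, the rows are $s=(g_a,g_b,-g_c)$ and $\varphi=(-bg_a,ag_b,0)$. The minors are $(a+b)g_ag_b=cg_ag_b$, $-bg_ag_c$ and $ag_bg_c$ up to sign, and $G=\gcd(g_a,g_b,g_c)$. This gives exactly $\Delta$ as in~\eqref{eq:wronskian-step}, proving~\eqref{eq:wronskian-image}. The main point needing care is the invariance and reduction step, together with the degenerate case $a=1$.

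For the divisibility, I will first observe that $n/\rad(n)\mid g(n)$. For $q\neq p$ the term $nv_p(n)/p$ has full $q$-valuation $v_q(n)$, while its $p$-valuation is at least $v_p(n)-1$. Hence $u\in(a/\rad a)\mathbb{Z}$, so $W^{\psi}=av-bu$ is divisible by $a/\rad(a)$. Using the other two forms in~\eqref{eq:int-wronskian}, $W^{\psi}$ is likewise divisible by $b/\rad(b)$ and $c/\rad(c)$. These are coprime, so $abc/R$ divides every Wronskian, in particular $\Delta$, since $\Delta$ lies in the image. Finally, $1/p\le\log p/(2\log 2)$ holds for every prime: it is an equality at $p=2$ and $p\log p$ increases with $p$. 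Multiplying by $v_p(n)$ and summing gives $\sum_{p\mid n} v_p(n)/p\le\sum_{p\mid n} v_p(n)\log p/(2\log 2)=\log n/(2\log 2)$.
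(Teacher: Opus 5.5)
Your proposal is correct and follows the paper's proof essentially step by step: you parametrize $(d^{\psi}a,d^{\psi}b,d^{\psi}c)$ by multiples of $g_a,g_b,g_c$, reduce the additivity row to $(\gcd(g_a,g_b,g_c),0,0)$ by a unimodular change of coordinates using invariance of the gcd of the $2\times 2$ minors, obtain the divisibility by the valuation argument through the three forms of the Wronskian, and finish with the prime-wise inequality $1/p\le\log p/(2\log 2)$. The only difference is cosmetic: the paper parametrizes by arbitrary $(x,y,z)\in\mathbb{Z}^3$ rather than by a basis of $M$, so when $a=1$ the value $g_a=0$ just gives a zero column and needs no separate rank-$2$ treatment.
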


\begin{proof}
Since $a$, $b$ and $c$ are pairwise coprime, the values of $\psi$ on the primes of each of them can be chosen independently, and by B\'ezout's identity the values of $d^{\psi}a$, $d^{\psi}b$ and $d^{\psi}c$ are $g_ax$, $g_by$ and $g_cz$ with arbitrary $x,y,z\in\mathbb{Z}$. Additivity imposes $g_ax+g_by-g_cz=0$, and the Wronskian is $a\,g_by-b\,g_ax$. Consider the matrix
\begin{equation}\label{eq:evaluation-matrix}
\begin{pmatrix}g_a&g_b&-g_c\\-b\,g_a&a\,g_b&0\end{pmatrix}.
\end{equation}
An invertible integer change of the coordinates $(x,y,z)$ sends its first row to $(\delta,0,0)$, with $\delta=\gcd(g_a,g_b,g_c)$, and its second row to some $(u,v,w)$. The kernel of the first row then consists of the vectors with first coordinate zero, on which the second row takes the values $\gcd(v,w)\mathbb{Z}$. The greatest common divisor of the two-by-two minors is invariant under the change of coordinates. It equals $\delta\gcd(v,w)$ after the change and $\gcd(c\,g_ag_b,b\,g_ag_c,a\,g_bg_c)$ before it, which proves~\eqref{eq:wronskian-image}.

Let $p$ be a prime dividing $a$, with $v_p(a)=\alpha$. Every term of $dn$ for $n=a$ has $p$-adic valuation at least $\alpha-1$ after applying $\psi$, so $v_p(d^{\psi}a)\ge\alpha-1$, while $v_p(a\,d^{\psi}b)\ge\alpha$. The first form in~\eqref{eq:int-wronskian} therefore gives $v_p(W^{\psi})\ge\alpha-1$. The same argument applies to the primes dividing $b$, and to those dividing $c$ through the second form in~\eqref{eq:int-wronskian}. Since $a$, $b$ and $c$ are pairwise coprime, $abc/R$ divides every Wronskian, and in particular $\Delta$. For the second claim, $1/p\le\log p/(2\log 2)$ for every prime $p$, with equality for $p=2$ alone, and summing $v_p(n)\log p$ gives $\log n$~\cite[Lemma~2.2]{Pasten2022}.
\end{proof}

\begin{theorem}[Pasten]\label{thm:pasten}
Let $(a,b,c)$ be a triple, and let $\psi\in\mathcal{T}(a,b)$ with $a$ and $b$ $\psi$-independent. Then
\[
\frac{c}{\log c}\le R\,\frac{\|\psi\|}{\log 2}.
\]
\end{theorem}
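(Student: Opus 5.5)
We follow the Mason--Stothers argument of Theorem~\ref{thm:mason}, using Lemma~\ref{lem:int-divisibility} for the divisibility. Since $W^{\psi}\ne 0$ and $abc/R$ divides every Wronskian, we have the lower bound
\[
\frac{abc}{R}\le |W^{\psi}|.
\]
It remains to give an upper bound for $|W^{\psi}|$ of the form $abc\,\|\psi\|\log c/(c\log 2)$. Combining the two bounds and cancelling $ab$ then gives $c\le R\,\|\psi\|\log c/\log 2$, which is the claim.

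For the upper bound we choose, among the three forms in~\eqref{eq:int-wronskian}, the one that suits the sizes. Since $c$ is the largest of the three numbers, we use the form $W^{\psi}=a\,d^{\psi}b-b\,d^{\psi}a$, which omits $c$, in direct analogy with omitting the polynomial of maximal degree. By~\eqref{eq:wronskian-sum},
\[
|W^{\psi}|\le ab\Bigl(\sum_{p\mid b}\frac{v_p(b)}{p}+\sum_{p\mid a}\frac{v_p(a)}{p}\Bigr)\|\psi\|=ab\,\|\psi\|\sum_{p\mid ab}\frac{v_p(ab)}{p}.
\]
The last equality holds because $a$ and $b$ are coprime, so the two sums together run over the primes of $ab$ with their full multiplicities. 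The final estimate of Lemma~\ref{lem:int-divisibility}, applied with $n=ab$, bounds the remaining sum by $\log(ab)/(2\log 2)$.

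This gives $abc/R\le ab\,\|\psi\|\log(ab)/(2\log 2)$, that is, $c\le R\,\|\psi\|\log(ab)/(2\log 2)$. Since $a<c$ and $b<c$, we have $\log(ab)<2\log c$, and therefore $c<R\,\|\psi\|\log c/\log 2$. Dividing by $\log c>0$ gives the stated inequality, in fact strictly.

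Nothing here is a real obstacle. The only point needing care is that we must not bound the two terms $a\,d^{\psi}b$ and $b\,d^{\psi}a$ separately through the individual sizes $\log a$ and $\log b$. Instead we merge them into the single sum over the primes of $ab$, which is legitimate by coprimality, and only then use $ab<c^2$. Bounding the terms separately would lose the factor $2$ that exactly cancels the $2\log 2$ in the lemma.
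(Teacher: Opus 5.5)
Your proof is correct and is essentially the paper's argument: you combine the divisibility lower bound $abc/R\le|W^{\psi}|$ with the estimate $|W^{\psi}|\le ab\,\|\psi\|\log(ab)/(2\log 2)$ from \eqref{eq:wronskian-sum}, then use $\log(ab)\le 2\log c$. One small correction to your closing remark: bounding the two terms separately by $\log b$ and $\log a$ gives the same total $\log a+\log b=\log(ab)$, so merging the sums is harmless but loses nothing if omitted.
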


\begin{proof}
By Lemma~\ref{lem:int-divisibility} and $W^{\psi}\ne 0$, one has $abc/R\le|W^{\psi}|$. By~\eqref{eq:wronskian-sum} and the second part of Lemma~\ref{lem:int-divisibility}, $|W^{\psi}|\le ab\,\|\psi\|\log(ab)/(2\log 2)$. Dividing by $ab$ and using $\log(ab)\le 2\log c$ gives the claim~\cite[Theorem~3.3]{Pasten2022}.
\end{proof}

The norm $\|\psi\|$ thus takes the place of the degree drop in the proof of Theorem~\ref{thm:mason}. If $\|\psi\|$ were bounded, then the inequality would give the $abc$ conjecture up to a logarithm. In the quality plane, and with the exact divisibility and the prime weights kept, it reads as follows.

\begin{corollary}\label{cor:plane-form}
Under the hypotheses of Theorem~\ref{thm:pasten}, write $S(n)=\sum_{p\mid n}v_p(n)/p$ and put
\begin{equation}\label{eq:prime-weights}
\begin{gathered}
\sigma=\min\Bigl\{S(ab),\ \frac{c}{b}\,S(ac),\ \frac{c}{a}\,S(bc)\Bigr\},\qquad\nu=\frac{R\Delta}{abc},\\
\varepsilon_{\psi}=\frac{\log(\|\psi\|\,\sigma/\nu)}{\log R}.
\end{gathered}
\end{equation}
Then $\nu$ is a positive integer, $c\le R\,\|\psi\|\,\sigma/\nu$ and $X\le 1+\varepsilon_{\psi}$, and every presentation of the triple satisfies
\begin{equation}\label{eq:weighted-gain-bounds}
G_a\le 1+\varepsilon_{\psi}\qquad\text{and}\qquad G_p\le Y\le\lambda(1+\varepsilon_{\psi})<3(1+\varepsilon_{\psi}).
\end{equation}
\end{corollary}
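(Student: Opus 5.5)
The argument runs in three steps: integrality of $\nu$, a lower bound $|W^{\psi}|\ge\Delta$, and three upper bounds on $|W^{\psi}|$, one from each form of the Wronskian in \eqref{eq:int-wronskian}.

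\textbf{Integrality of $\nu$.} Lemma~\ref{lem:int-divisibility} says that $abc/R$ divides $\Delta$. Hence $\nu=R\Delta/(abc)$ is a positive integer, and it is positive because $\Delta\ge 1$. This last point needs a small check: $\Delta$ is a ratio of gcds of positive quantities, since $g_b\ge 1$ and $g_c\ge 1$ (because $b,c\ge 2$). If $a=1$ then $g_a=0$, and the gcd reduces to $a\,g_bg_c>0$. So $\Delta$ is a positive integer in every case.

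\textbf{Lower bound.} Since $W^{\psi}\ne 0$ and $W^{\psi}\in\Delta\mathbb{Z}$ by \eqref{eq:wronskian-image}, we get $|W^{\psi}|\ge\Delta=\nu\,abc/R$.

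\textbf{Upper bounds.} I bound $|W^{\psi}|$ by each of the three forms.

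\emph{First form.} By \eqref{eq:wronskian-sum} and the triangle inequality, $|W^{\psi}|\le ab\,\|\psi\|\,(S(a)+S(b))=ab\,\|\psi\|\,S(ab)$. Here $S$ is additive over the coprime factors $a$ and $b$.

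\emph{Second form.} For $a\,d^{\psi}c-c\,d^{\psi}a$, the analogue of \eqref{eq:wronskian-sum} gives $W^{\psi}=ac\bigl(\sum_{p\mid c}\tfrac{v_p(c)}{p}\psi(\xi_p)-\sum_{p\mid a}\tfrac{v_p(a)}{p}\psi(\xi_p)\bigr)$. Hence $|W^{\psi}|\le ac\,\|\psi\|\,S(ac)=ab\,\|\psi\|\,\tfrac{c}{b}S(ac)$.

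\emph{Third form.} Similarly, $|W^{\psi}|\le bc\,\|\psi\|\,S(bc)=ab\,\|\psi\|\,\tfrac{c}{a}S(bc)$.

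Taking the minimum of the three gives $|W^{\psi}|\le ab\,\|\psi\|\,\sigma$.

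\textbf{Combining.} The two bounds give $\nu\,abc/R\le ab\,\|\psi\|\sigma$, that is, $c\le R\,\|\psi\|\,\sigma/\nu$. Taking logarithms and dividing by $\log R>0$ (note $R\ge 2$) gives
\[
X=\frac{\log c}{\log R}\le 1+\frac{\log(\|\psi\|\sigma/\nu)}{\log R}=1+\varepsilon_{\psi}.
\]
The logarithm is defined because $\|\psi\|\sigma\ge\nu c/R>0$.

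\textbf{Gain bounds.} These follow from \eqref{eq:dictionary}. First, $G_a\le X\le 1+\varepsilon_{\psi}$. Next, $G_p\le Y=\lambda X\le\lambda(1+\varepsilon_{\psi})$. Finally, $\lambda<3$ from Section~\ref{sec:plane}, and $1+\varepsilon_{\psi}\ge X>0$, so the last inequality is strict.

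\textbf{Main obstacle.} The only step needing care is the second and third upper bounds. There one must use additivity $d^{\psi}a+d^{\psi}b=d^{\psi}c$ to justify the alternative forms, and check that the expansion of $d^{\psi}c$ via the definition of $d$ gives exactly the weighted sum over the primes of $c$. The rest is bookkeeping.
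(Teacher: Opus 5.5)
Your proposal is correct and follows essentially the same route as the paper. It uses $|W^{\psi}|\ge\Delta=\nu\,abc/R$ from Lemma~\ref{lem:int-divisibility} and the bounds $|W^{\psi}|\le xy\,\|\psi\|\,S(xy)$ from the three forms in~\eqref{eq:int-wronskian}, and it gets the gain bounds from~\eqref{eq:dictionary} together with $\lambda<3$. The only difference is cosmetic: you normalize each upper bound to the factor $ab$ before taking the minimum, whereas the paper writes three inequalities for $\nu c$, $\nu b$, $\nu a$ and rescales them by $c/b$ and $c/a$.
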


\begin{proof}
Lemma~\ref{lem:int-divisibility} shows that $\nu$ is a positive integer and that the non-zero integer $W^{\psi}$ satisfies $|W^{\psi}|\ge\Delta=\nu\,abc/R$. Each of the three forms of $W^{\psi}$ in~\eqref{eq:int-wronskian} involves two of the integers $a$, $b$, $c$, say $x$ and $y$, and the computation leading to~\eqref{eq:wronskian-sum} gives $|W^{\psi}|\le xy\,\|\psi\|\,S(xy)$. The three forms therefore give $\nu c\le R\,\|\psi\|\,S(ab)$, $\nu b\le R\,\|\psi\|\,S(ac)$ and $\nu a\le R\,\|\psi\|\,S(bc)$. Multiplying the second inequality by $c/b$ and the third by $c/a$ gives three upper bounds for $\nu c$, whose minimum is $\nu c\le R\,\|\psi\|\,\sigma$, and taking logarithms gives the bound on $X$. The bounds on the gains follow from~\eqref{eq:dictionary} and $Y=\lambda X$ with $\lambda<3$. Since $\nu\ge 1$ and $\sigma\le S(ab)\le\log(ab)/(2\log 2)\le\log c/\log 2$ by Lemma~\ref{lem:int-divisibility}, the corollary contains Theorem~\ref{thm:pasten}.
\end{proof}

The prime weight $\sigma$ is the best coefficient for this estimate of the Wronskian, and the distance between the bound and the quality has two separate sources.

\begin{proposition}\label{prop:optimal-coefficient}
For every triple,
\begin{equation}\label{eq:optimal-wronskian-coefficient}
\max_{0\ne\phi\in\mathcal{T}(a,b)}\frac{|W^{\phi}|}{\|\phi\|}=ab\sigma.
\end{equation}
For $\psi$ as in Corollary~\ref{cor:plane-form}, put $m_{\psi}=|W^{\psi}|/\Delta$ and $\theta_{\psi}=|W^{\psi}|/(ab\,\|\psi\|\,\sigma)$. Then $m_{\psi}$ is a positive integer, $0<\theta_{\psi}\le 1$, and
\begin{equation}\label{eq:wronskian-gap}
1+\varepsilon_{\psi}-X=\frac{\log m_{\psi}+\log(1/\theta_{\psi})}{\log R}.
\end{equation}
\end{proposition}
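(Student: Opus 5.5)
The plan is to prove the upper bound in \eqref{eq:optimal-wronskian-coefficient} by the estimates already used for Corollary~\ref{cor:plane-form}, to prove that the bound is attained by linear programming duality, and then to read off \eqref{eq:wronskian-gap} from the definitions.

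\emph{Upper bound.} For any $0\ne\phi\in\mathcal{T}(a,b)$, apply the argument in the proof of Corollary~\ref{cor:plane-form} to each of the three forms in \eqref{eq:int-wronskian}. This gives $|W^{\phi}|\le ab\|\phi\|S(ab)$, $|W^{\phi}|\le ac\|\phi\|S(ac)=ab\cdot\frac{c}{b}S(ac)\|\phi\|$ and $|W^{\phi}|\le bc\|\phi\|S(bc)=ab\cdot\frac{c}{a}S(bc)\|\phi\|$. Hence $|W^{\phi}|\le ab\sigma\|\phi\|$.

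\emph{Attainment.} Since $W^{\phi}/\|\phi\|$ is invariant under scaling, it suffices to solve the real problem of maximizing $W(\phi)$ subject to $|\phi_p|\le 1$ for $p\mid abc$ and the single additivity constraint $L(\phi)=d^{\phi}a+d^{\phi}b-d^{\phi}c=0$. This is a linear program with rational data, so some optimal vertex is rational, and clearing denominators gives an integral $\phi\in\mathcal{T}(a,b)$ with the same ratio. It remains to show that the optimum equals $ab\sigma$.

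\begin{itemize}
\item By duality (maximize over the box after adding $t\cdot L$ to the objective), the optimum is
\[
\min_{t\in\mathbb{R}}F(t),\qquad F(t)=\sum_p|w_p-t\ell_p|,
\]
where $w_p$ and $\ell_p$ are the coefficients of $\phi_p$ in $W$ and $L$.
\item On the primes of $a$, of $b$ and of $c$ separately, the vectors $(w_p)$ and $(\ell_p)$ are proportional, with common factors $av_p(a)/p$, $bv_p(b)/p$ and $cv_p(c)/p$ respectively. From \eqref{eq:wronskian-sum}, $w_p=-b\,av_p(a)/p$ on $a$, $w_p=a\,bv_p(b)/p$ on $b$ and $w_p=0$ on $c$, while $\ell_p=av_p(a)/p$, $bv_p(b)/p$ and $-cv_p(c)/p$.
\item Therefore $F(t)=aS(a)|b+t|+bS(b)|a-t|+cS(c)|t|$. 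This is convex and piecewise linear with breakpoints only at $t=0,a,-b$, so its minimum is attained at one of them.
\item The three breakpoint values are $F(0)=ab\,S(a)+ab\,S(b)=ab\,S(ab)$, $F(a)=ac\,S(a)+ac\,S(c)=ac\,S(ac)$ and $F(-b)=bc\,S(bc)$. Their minimum is $ab\sigma$.
\end{itemize}

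This bookkeeping of the dual, in particular the signs of $\ell_p$ and the identification of the breakpoint values with the three forms in \eqref{eq:int-wronskian}, is the step I expect to need the most care. The upper bound is routine.

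\emph{Gap formula.} By \eqref{eq:wronskian-image} and $W^{\psi}\ne0$, $m_{\psi}$ is a positive integer. Since $W^{\psi}\ne 0$, we have $\theta_{\psi}>0$, and the first part gives $\theta_{\psi}\le 1$. Now write
\[
m_{\psi}\,\nu\,\frac{abc}{R}=m_{\psi}\Delta=|W^{\psi}|=\theta_{\psi}\,ab\,\|\psi\|\,\sigma,
\]
so that
\[
c=R\,\frac{\|\psi\|\sigma}{\nu}\cdot\frac{\theta_{\psi}}{m_{\psi}}.
\]
Taking logarithms and dividing by $\log R$ gives
\[
X=1+\varepsilon_{\psi}-\frac{\log m_{\psi}+\log(1/\theta_{\psi})}{\log R},
\]
which is \eqref{eq:wronskian-gap}.
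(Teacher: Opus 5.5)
Your proposal is correct. The upper bound and the gap formula match the paper's argument, and the attainment step takes a different route. The paper passes to the evaluations $u=d^{\phi}a$, $v=d^{\phi}b$, $w=d^{\phi}c$. These range independently over intervals $[-s_n,s_n]$ with $s_n=nS(n)$, subject to $u+v=w$. The paper then writes down an explicit maximizer in each of three cases ($|s_a-s_b|\le s_c$, $s_b>s_a+s_c$, $s_a>s_b+s_c$) and lifts it to rational prime values.

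You stay in the prime coordinates and use LP strong duality instead. Your dual function $F(t)=aS(a)|b+t|+bS(b)|a-t|+cS(c)|t|$ is right. Its breakpoints $t=0,a,-b$ correspond to $W$, $W-aL=a\,d^{\phi}c-c\,d^{\phi}a$ and $W+bL=c\,d^{\phi}b-b\,d^{\phi}c$. So the three forms of \eqref{eq:int-wronskian} are exactly the dual vertices, and the upper bound is weak duality.

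What your route buys is a structural explanation of why the minimum of these three bounds is sharp, with no case split. It also handles $a=1$ (where $s_a=0$) automatically. What it gives up is construction: you only get existence of a rational optimal vertex. The paper's argument shows which form is tight and exhibits an extremal derivation, whose prime values are proportional within each of $a$, $b$, $c$.

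Two small points should be stated explicitly. Strong duality applies because $\phi=0$ is feasible and the box is bounded. And $\sigma>0$ because $b,c\ge 2$, so the optimal vertex is non-zero and clears to a non-zero element of $\mathcal{T}(a,b)$.
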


\begin{proof}
Write $s_a=aS(a)$, $s_b=bS(b)$ and $s_c=cS(c)$, so that $|d^{\phi}n|\le\|\phi\|\,nS(n)$ for $n$ in $\{a,b,c\}$. Allow for a moment real values $\phi(\xi_p)$ of absolute value at most $1$. Since $a$, $b$ and $c$ have disjoint prime supports, the evaluations $u=d^{\phi}a$, $v=d^{\phi}b$ and $w=d^{\phi}c$ then range independently over $[-s_a,s_a]$, $[-s_b,s_b]$ and $[-s_c,s_c]$, subject to additivity $u+v=w$. The three forms in~\eqref{eq:int-wronskian} bound $|av-bu|$ by $bs_a+as_b$, $cs_a+as_c$ and $cs_b+bs_c$, whose minimum is $ab\sigma$.

This minimum is attained. If $|s_a-s_b|\le s_c$, then $(u,v)=(-s_a,s_b)$ attains the first bound. If $s_b>s_a+s_c$, then $(u,v)=(-s_a,s_a+s_c)$ attains the second. If $s_a>s_b+s_c$, then $(u,v)=(-s_b-s_c,s_b)$ attains the third. Since every value of $|av-bu|$ is at most each of the three bounds, the bound attained is their minimum. These evaluations come from rational prime values. For each $n>1$ among $a$, $b$, $c$, the prime values $\phi(\xi_p)=u_n/(nS(n))$ for $p\mid n$, with $u_a=u$, $u_b=v$ and $u_c=w$, give $d^{\phi}n=u_n$ and $|\phi(\xi_p)|\le 1$. Clearing denominators then gives a derivation $\phi\in\mathcal{T}(a,b)$ with $|W^{\phi}|/\|\phi\|\ge ab\sigma$. The reverse inequality is the estimate in the proof of Corollary~\ref{cor:plane-form}, which proves~\eqref{eq:optimal-wronskian-coefficient}.

For the second claim, Lemma~\ref{lem:int-divisibility} shows that $m_{\psi}$ is a positive integer, and~\eqref{eq:optimal-wronskian-coefficient} gives $\theta_{\psi}\le 1$. By the definition of $\nu$, one has $\nu c/R=\Delta/(ab)$, so that $R\,\|\psi\|\,\sigma/(\nu c)=ab\,\|\psi\|\,\sigma/\Delta=m_{\psi}/\theta_{\psi}$. Taking logarithms gives~\eqref{eq:wronskian-gap}.
\end{proof}

Pasten showed by Siegel's lemma~\cite{BombieriVaaler1983} that $\mathcal{T}(a,b)$ contains linearly independent derivations $\psi_1,\dots,\psi_r$, with $r=\omega(abc)-1$, such that $\prod_i\|\psi_i\|\le\omega(abc)\,c\log c/(2\log 2)$~\cite[Theorem~2.6]{Pasten2022}. The smallest of these norms is therefore at most $(\omega(abc)\,c\log c/(2\log 2))^{1/r}$, which gives small non-zero derivations~\cite[Corollary~2.7]{Pasten2022}. Independence is guaranteed for at least one index $i$, which need not be the index of smallest norm, and the norm of $\psi_i$ is then bounded by the whole product~\cite[Lemma~3.5]{Pasten2022}. Substituting the product bound into Theorem~\ref{thm:pasten} gives only $1\le R\,\omega(abc)(\log c)^2/(2(\log 2)^2)$, which places no restriction on the quality, whatever the value of $\omega(abc)$. Pasten therefore formulated the following conjecture, which we state for a given exponent.

\begin{conjecture}[Small derivatives conjecture with exponent $\eta$]\label{conj:sdc}
Let $0<\eta<1$. For all but finitely many triples whose entries do not form a set $\{1,N,q\}$ with $q$ prime, there is $\psi\in\mathcal{T}(a,b)$ with $a$ and $b$ $\psi$-independent and $\|\psi\|<c^{\eta}$.
\end{conjecture}

The small derivatives conjecture of Pasten~\cite[Conjecture~3.9]{Pasten2022} asserts that Conjecture~\ref{conj:sdc} holds for some exponent $\eta$.

\begin{proposition}\label{prop:sdc-gains}
Assume Conjecture~\ref{conj:sdc} with exponent $\eta$. Then, for every $\varepsilon>0$, all but finitely many triples satisfy $X<1/(1-\eta)+\varepsilon$, and every presentation of each of these triples satisfies
\begin{equation}\label{eq:uniform-sdc-gains}
G_a<\frac{1}{1-\eta}+\varepsilon\qquad\text{and}\qquad G_p<\frac{3}{1-\eta}+3\varepsilon.
\end{equation}
\end{proposition}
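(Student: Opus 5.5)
The argument feeds the small derivation from Conjecture~\ref{conj:sdc} into Theorem~\ref{thm:pasten}, and then passes the resulting bound on $X$ to the gains through~\eqref{eq:dictionary}. Fix $\varepsilon>0$. Conjecture~\ref{conj:sdc} excludes finitely many triples, and it also says nothing about triples whose entries form a set $\{1,N,q\}$ with $q$ prime. So the first step is to handle those exceptional triples. Since $a<b<c$ and the entries are $1,N,q$, one has $a=1$, and $q$ is either $b$ or $c$. If $c=q$ is prime, then $R\ge c$ and $X\le 1$. If $b=q$ is prime, then $R=q\,\rad(c)$ and $c=q+1$, so $R\ge q\ge c/2$ and $X\le 1+\log 2/\log R$. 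In both cases $X<1/(1-\eta)+\varepsilon$ once $R$ is large, and only finitely many triples have bounded $R$. Hence the exceptional family adds only finitely many exceptions.

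For the remaining triples, take $\psi\in\mathcal{T}(a,b)$ with $a$, $b$ $\psi$-independent and $\|\psi\|<c^{\eta}$. Theorem~\ref{thm:pasten} gives $c\le R\,\|\psi\|\log c/\log 2<R\,c^{\eta}\log c/\log 2$. Hence
\[
(1-\eta)\log c<\log R+\log\log c-\log\log 2.
\]
Dividing by $(1-\eta)\log R$ gives
\[
X<\frac{1}{1-\eta}\Bigl(1+\frac{\log\log c-\log\log 2}{\log R}\Bigr).
\]
The main obstacle is that the error term involves $\log\log c$ over $\log R$, and $c$ could a priori be huge compared with $R$. To handle this, split into two cases. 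If $X\ge 2/(1-\eta)$, the displayed inequality would force $(\log\log c-\log\log 2)/\log R\ge 1$. With $\log c=X\log R$ this reads $\log X+\log\log R-\log\log 2\ge\log R$, which is impossible when $R$ is large in terms of $X$. To make this uniform in $X$, first observe that $\log\log c\le\log\log R+\log X$. Then the inequality $X(1-\eta)<1+(\log X+\log\log R+O(1))/\log R$ forces $X$ to be bounded once $R$ is large, since the right side grows only like $\log X$. With $X$ bounded, $\log\log c/\log R\to 0$ as $R\to\infty$. So for $R$ large, $X<1/(1-\eta)+\varepsilon$.

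Only finitely many triples have $R$ below any fixed bound, because $a,b,c$ are composed of the finitely many primes dividing $R$. More precisely, the case $R$ bounded with $X$ bounded is finite directly, and bounded $R$ with large $X$ is excluded by the first case, or simply by finiteness of $S$-unit equations. The cleaner route is to avoid $S$-units: the inequality $X(1-\eta)<1+(\log X+\log\log R+C)/\log R$ bounds $X$ uniformly for $R\ge 3$, hence bounds $c$ in terms of $R$, giving finiteness for bounded $R$.

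Finally, for every presentation, \eqref{eq:dictionary} gives $G_a\le X<1/(1-\eta)+\varepsilon$. It also gives $G_p\le Y=\lambda X<3X<3/(1-\eta)+3\varepsilon$, using $\lambda<3$ from Section~\ref{sec:plane}. This is~\eqref{eq:uniform-sdc-gains}.
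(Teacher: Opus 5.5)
Your proof is correct and follows the paper's route: set aside the family $\{1,N,q\}$, insert $\|\psi\|<c^{\eta}$ into Theorem~\ref{thm:pasten}, and pass to the gains through~\eqref{eq:dictionary} and $\lambda<3$. The paper does without your uniform bound on $X$ by taking $c$ rather than $R$ large: it absorbs $\log c/\log 2$ into $c^{\delta}$ with $1/(1-\eta-\delta)<1/(1-\eta)+\varepsilon$, so that finiteness below the threshold is trivial, and in the family $\{1,N,q\}$ it uses that $N=q\pm1$ is even (unless the triple is $1+2=3$) to get $R\ge 2q>c$, hence $X<1$ outright.
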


\begin{proof}
Consider a triple formed by $1$, $N$ and a prime $q$. Since $N=q\pm1$, either $q$ is odd and $N$ is even, or the triple is $1+2=3$. In both cases $R\ge 2q>c$, hence $X<1$~\cite[Lemma~4.1]{Pasten2022}. For the remaining triples outside the finite exceptional set of Conjecture~\ref{conj:sdc}, Theorem~\ref{thm:pasten} with $\|\psi\|<c^{\eta}$ gives $c^{1-\eta}<R\log c/\log 2$. Given $\varepsilon>0$, choose $0<\delta<1-\eta$ with $1/(1-\eta-\delta)<1/(1-\eta)+\varepsilon$. Once $c$ is large, one has $\log c/\log 2<c^{\delta}$, whence $c^{1-\eta-\delta}<R$ and $X<1/(1-\eta-\delta)$, and only finitely many triples have $c$ below a given bound. The bounds on the gains follow from~\eqref{eq:dictionary} and $Y<3X$.
\end{proof}

In this comparison, an $abc$ bound with exponent $M>1$ means the assertion
\begin{equation}\label{eq:abc-exponent-convention}
c<R^M\qquad\text{for all but finitely many triples.}
\end{equation}
This is a weak form of the $abc$ conjecture, with a fixed exponent. The finite exceptional set is necessary for $M$ close to $1$, since Reyssat's triple satisfies $c>R^{1.62}$. Pasten formulates the weak form for every triple, as the conjecture of Oesterl\'e~\cite[Conjecture~3.2]{Pasten2022}. His two implications remain valid under this convention, since Lemma~4.4 of~\cite{Pasten2022} uses $c<R^M$ only for the triple under consideration, and the proof of Theorem~4.5 of~\cite{Pasten2022} discards finitely many triples.

Conjecture~\ref{conj:sdc} with exponent $\eta$ gives~\eqref{eq:abc-exponent-convention} for every $M>1/(1-\eta)$ by Proposition~\ref{prop:sdc-gains}. Conversely, if~\eqref{eq:abc-exponent-convention} holds for some $1<M<2$, then Conjecture~\ref{conj:sdc} holds for every $\eta$ with $1-(2-M)/(4M)<\eta<1$~\cite[Theorem~4.5]{Pasten2022}. The $abc$ conjecture gives~\eqref{eq:abc-exponent-convention} for every $M>1$, and hence Conjecture~\ref{conj:sdc} for every $3/4<\eta<1$~\cite[Corollary~4.6]{Pasten2022}. In the other direction, Conjecture~\ref{conj:sdc} for every $0<\eta<1$ would give the $abc$ conjecture itself.

The two implications thus link the exponents without matching them, as Figure~\ref{fig:exponents} shows. An exponent $1<M<2$ leads to every $\eta>1-(2-M)/(4M)$, and from there back to every exponent above $4M/(2-M)$, which exceeds $4$. Starting from the $abc$ conjecture, the two results return the bounds~\eqref{eq:abc-exponent-convention} for exponents $M>4$ alone.

In the quality plane, the first implication is the bound $\limsup X\le 1/(1-\eta)$ as $c\to\infty$ of Proposition~\ref{prop:sdc-gains}. The gains enter only through~\eqref{eq:dictionary}. Pasten's inequality bounds the quality itself, and it bounds the approximation gain exactly for the presentation $P=R$, where $G_a=X$.

\begin{figure}[ht]
\centering
\begin{tikzpicture}
\begin{axis}[width=0.78\textwidth,height=0.46\textwidth,xmin=1,xmax=5,ymin=0,ymax=1,xlabel={$M$},ylabel={$\eta$},legend pos=south east,legend cell align=left,legend style={font=\small},tick label style={font=\small}]
\addplot[domain=1:2,samples=120,thick,lightblue] {1-(2-x)/(4*x)};
\addlegendentry{$\eta=1-(2-M)/(4M)$}
\addplot[domain=1:5,samples=120,thick,dashed,copper] {1-1/x};
\addlegendentry{$\eta=1-1/M$}
\draw[->,gray,thick] (axis cs:1.03,0.75) -- (axis cs:3.93,0.75);
\draw[fill=white] (axis cs:1,0.75) circle (1.8pt);
\draw[fill=white] (axis cs:4,0.75) circle (1.8pt);
\end{axis}
\end{tikzpicture}
\caption{The exponents in Pasten's two implications. An $abc$ bound with exponent $1<M<2$, in the sense of~\eqref{eq:abc-exponent-convention}, gives Conjecture~\ref{conj:sdc} for every $\eta$ above the solid curve, and Conjecture~\ref{conj:sdc} with exponent $\eta$ gives an $abc$ bound for every $M$ to the right of the dashed curve. The arrow leads from the $abc$ conjecture, which gives $\eta>3/4$, to the exponents $M>4$, and its open endpoints are limiting values.}\label{fig:exponents}
\end{figure}

\section{Two measures of size}\label{sec:norms}

Pasten's norm measures a derivation by its values on the primes. Its values on the triple give another measure
\begin{equation}\label{eq:evaluated-size}
D_{\psi}=\max\{|d^{\psi}a|,|d^{\psi}b|,|d^{\psi}c|\}.
\end{equation}

\begin{proposition}\label{prop:value-norm}
Let $\psi\in\mathcal{T}(a,b)$ with $a$ and $b$ $\psi$-independent. Then
\begin{equation}\label{eq:value-norm-bounds}
\frac{\Delta}{c}=\frac{\nu ab}{R}\le D_{\psi}\le\|\psi\|\,\frac{c\log c}{2\log 2}.
\end{equation}
Moreover, $D_{\psi}>c/R$, and hence
\begin{equation}\label{eq:value-quality}
X<1+\frac{\log D_{\psi}}{\log R}.
\end{equation}
\end{proposition}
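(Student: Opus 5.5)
The plan is to bound $|W^{\psi}|$ from above in terms of $D_{\psi}$, to bound each of $|d^{\psi}a|$, $|d^{\psi}b|$, $|d^{\psi}c|$ by the norm, and then to settle the strict inequality by splitting into the cases $a\ge 2$ and $a=1$.

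\emph{Lower bound.} By Lemma~\ref{lem:int-divisibility} and $W^{\psi}\ne 0$, one has $|W^{\psi}|\ge\Delta=\nu\,abc/R$, where the last equality is the definition of $\nu$ in Corollary~\ref{cor:plane-form}. I would use the first form in~\eqref{eq:int-wronskian}, $W^{\psi}=a\,d^{\psi}b-b\,d^{\psi}a$, which gives
\[
|W^{\psi}|\le a|d^{\psi}b|+b|d^{\psi}a|\le(a+b)D_{\psi}=cD_{\psi}.
\]
This form is the right choice because its coefficients sum exactly to $c$, whereas the other two forms have coefficients summing to more than $c$. Dividing by $c$ gives $D_{\psi}\ge\Delta/c=\nu ab/R$.

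\emph{Upper bound.} For $n\in\{a,b,c\}$ one has $|d^{\psi}n|\le\|\psi\|\,nS(n)$ directly from the definition of $d$. The second part of Lemma~\ref{lem:int-divisibility} gives $S(n)\le\log n/(2\log 2)$. Since $n\le c$, each value is at most $\|\psi\|\,c\log c/(2\log 2)$. For $n=1$ the bound holds trivially, since $d1=0$.

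\emph{Strict inequality $D_{\psi}>c/R$.} I would split into two cases.
\begin{itemize}
\item If $a\ge 2$, then $ab\ge 2b>c$, as recalled in Section~\ref{sec:plane}. Together with $\nu\ge 1$, the lower bound gives $D_{\psi}\ge\nu ab/R>c/R$.
\item If $a=1$, the lower bound only gives $D_{\psi}\ge b/R$, which is too weak, so I would argue directly. Here $d^{\psi}a=0$, so $W^{\psi}=d^{\psi}b$ and $D_{\psi}\ge|d^{\psi}b|=|W^{\psi}|\ge abc/R=bc/R$. Since $a<b$ forces $b\ge 2$, this gives $D_{\psi}\ge bc/R>c/R$.
\end{itemize}
The case $a=1$ is the only delicate point.

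\emph{Bound on the quality.} From $D_{\psi}>c/R$ we get $c<RD_{\psi}$. Taking logarithms and dividing by $\log R>0$ yields~\eqref{eq:value-quality}; here $R\ge 2$, since $b\ge 2$.
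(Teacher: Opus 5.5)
Your proof is correct and follows the paper's argument essentially step for step: the first form of the Wronskian for the lower bound, the termwise estimate with Lemma~\ref{lem:int-divisibility} for the upper bound, and the split into $a\ge 2$ (where $\nu ab/R>c/R$) and $a=1$ (where $W^{\psi}=d^{\psi}b$ and $|W^{\psi}|\ge bc/R$). Your remark that for $a=1$ and $\nu=1$ the general lower bound $\nu b/R$ falls short of $c/R$ correctly explains why the case $a=1$ needs the separate argument.
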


\begin{proof}
The upper bound follows from $d^{\psi}n=n\sum_{p\mid n}v_p(n)\psi(\xi_p)/p$ and Lemma~\ref{lem:int-divisibility}, applied to $n\le c$. For the lower bound, the first form in~\eqref{eq:int-wronskian} gives $|W^{\psi}|\le a|d^{\psi}b|+b|d^{\psi}a|\le cD_{\psi}$, and Lemma~\ref{lem:int-divisibility} gives $\Delta\le|W^{\psi}|$, whence $D_{\psi}\ge\Delta/c=\nu ab/R$. If $a\ge 2$, then $ab>c$ and $D_{\psi}>c/R$. If $a=1$, then $d^{\psi}1=0$, so that $d^{\psi}c=d^{\psi}b$ and $W^{\psi}=d^{\psi}b$, whence $D_{\psi}=|W^{\psi}|\ge\Delta\ge bc/R>c/R$. Taking logarithms gives~\eqref{eq:value-quality}.
\end{proof}

For every $\psi\in\mathcal{T}(a,b)$ with $W^{\psi}\ne 0$, the bound~\eqref{eq:value-quality} follows from additivity and the divisibility of the Wronskian, with no upper estimate for $\|\psi\|$, so that it gives no bound on the quality by itself. A bound on $X$ through this inequality would need a separate upper estimate for $D_{\psi}$, such as $D_{\psi}<R^{\delta}$, which would give $X<1+\delta$. Such an estimate is a strong hypothesis, since Example~\ref{ex:1-80-81} shows that even the smallest value of $D_{\psi}$ can exceed $c$, and that $\|\psi\|$ can grow without bound while $D_{\psi}$ stays fixed. For $a=1$ the smallest value of $D_{\psi}$ follows from the exact image of the Wronskian.

\begin{proposition}\label{prop:evaluation-minimum}
For a triple with $a=1$, one has $\Delta=\operatorname{lcm}\bigl(g(b),g(c)\bigr)$ and
\begin{equation}\label{eq:minimum-value-size}
\min\bigl\{D_{\psi}\colon\psi\in\mathcal{T}(1,b),\ W^{\psi}\ne 0\bigr\}=\Delta.
\end{equation}
If moreover $c=q^k$ for a prime $q$, then every $\psi\in\mathcal{T}(1,b)$ with $W^{\psi}\ne 0$ satisfies
\begin{equation}\label{eq:prime-power-obstruction}
\|\psi\|\ge\frac{g(b)}{\gcd\bigl(g(b),kq^{k-1}\bigr)}.
\end{equation}
\end{proposition}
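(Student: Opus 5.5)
For $a=1$ we have $d^{\psi}1=0$, so the statement reduces to the description of the Wronskian image in Lemma~\ref{lem:int-divisibility} together with the observation, already used in the proof of Proposition~\ref{prop:value-norm}, that every evaluation on the triple then equals the Wronskian. No step should be a genuine obstacle. The only points needing care are the convention $g(1)=0$ in the formula for $\Delta$, and checking that the minimum is attained rather than merely bounded below.

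\emph{The formula for $\Delta$.} I would substitute $g_a=g(1)=0$ into~\eqref{eq:wronskian-step}. The numerator becomes $\gcd(0,0,g_bg_c)=g_bg_c$ and the denominator becomes $\gcd(0,g_b,g_c)=\gcd(g_b,g_c)$. Hence $\Delta=g_bg_c/\gcd(g_b,g_c)=\operatorname{lcm}(g(b),g(c))$. Here $g_b\ge1$ and $g_c\ge1$, since $b,c\ge2$.

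\emph{The minimum of $D_{\psi}$.} For $\psi\in\mathcal{T}(1,b)$, additivity together with $d^{\psi}1=0$ gives $d^{\psi}c=d^{\psi}b$. The first form in~\eqref{eq:int-wronskian} gives $W^{\psi}=d^{\psi}b$. So all three evaluations have absolute value $|W^{\psi}|$ or $0$, and $D_{\psi}=|W^{\psi}|$. By~\eqref{eq:wronskian-image}, every non-zero Wronskian has absolute value at least $\Delta$. The value $\Delta$ itself is attained by some $\psi\in\mathcal{T}(1,b)$, so the minimum in~\eqref{eq:minimum-value-size} equals $\Delta$. If one prefers a direct construction, the B\'ezout description in the proof of Lemma~\ref{lem:int-divisibility} also works. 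One chooses prime values on $b$ and on $c$ so that $d^{\psi}b=g_by$ and $d^{\psi}c=g_cz$ with $g_by=g_cz=\operatorname{lcm}(g_b,g_c)$. The prime value on the symbol of $1$ is irrelevant, since $1$ has no prime factors.

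\emph{The prime power obstruction.} When $c=q^k$, one has $d^{\psi}c=kq^{k-1}\psi(\xi_q)$, so $g(c)=kq^{k-1}$. Since $W^{\psi}=d^{\psi}c=d^{\psi}b$, the integer $W^{\psi}$ is a multiple of $g(b)$. Hence $g(b)\mid kq^{k-1}\psi(\xi_q)$. Dividing by $e=\gcd(g(b),kq^{k-1})$ shows that $g(b)/e$ divides $\psi(\xi_q)$. The hypothesis $W^{\psi}\ne0$ forces $\psi(\xi_q)\ne0$. Therefore $|\psi(\xi_q)|\ge g(b)/e$, and so $\|\psi\|\ge|\psi(\xi_q)|\ge g(b)/e$, which is~\eqref{eq:prime-power-obstruction}.
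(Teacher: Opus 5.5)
Your proof is correct and follows the paper's argument. You substitute $g(1)=0$ into~\eqref{eq:wronskian-step}, use $d^{\psi}1=0$ to get $D_{\psi}=|W^{\psi}|$ and invoke~\eqref{eq:wronskian-image}, and read off the divisibility of $\psi(\xi_q)$ from $W^{\psi}=kq^{k-1}\psi(\xi_q)$; the only difference is that you use $g(b)\mid d^{\psi}b$ where the paper uses $\Delta\mid d^{\psi}c$, which is equivalent since $\Delta=\operatorname{lcm}\bigl(g(b),g(c)\bigr)$.
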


\begin{proof}
For $a=1$ one has $g(a)=0$, and~\eqref{eq:wronskian-step} gives $\Delta=g(b)g(c)/\gcd(g(b),g(c))$. Since $d^{\psi}1=0$, additivity gives $d^{\psi}b=d^{\psi}c=W^{\psi}$, so that $D_{\psi}=|W^{\psi}|$, and~\eqref{eq:wronskian-image} gives the minimum. If $c=q^k$, then $d^{\psi}c=kq^{k-1}\psi(\xi_q)=g(c)\,\psi(\xi_q)$ is a non-zero multiple of $\Delta$, so that $\psi(\xi_q)$ is a non-zero multiple of $\Delta/g(c)=g(b)/\gcd(g(b),g(c))$.
\end{proof}

For $k=1$ the bound~\eqref{eq:prime-power-obstruction} reads $\|\psi\|\ge g(b)$, so that the triples $1+N=q$ excluded from Conjecture~\ref{conj:sdc} can carry large minimal norms. For $b=2^m$ and a prime $q=2^m+1$, for instance, one has $g(b)=m2^{m-1}$, of the order of $c\log c$, and Pasten observed the same size for $1+(2^n-1)=2^n$ with $2^n-1$ prime~\cite[Example~3.6]{Pasten2022}.

\begin{example}\label{ex:1-80-81}
For $1+80=81$, one has $g(80)=\gcd(160,16)=16$ and $g(81)=108$, so that Proposition~\ref{prop:evaluation-minimum} gives $\min D_{\psi}=432$, a value larger than $c=81$, and $\|\psi\|\ge 4$. The derivation with $(\psi(\xi_2),\psi(\xi_3),\psi(\xi_5))=(-3,-4,3)$ attains both minima. Adding $t(1,0,-10)$ with $t\in\mathbb{Z}$ leaves $d^{\psi}80$ and $d^{\psi}81$ unchanged, so that the norm grows without bound while $D_{\psi}$ stays equal to $432$. For $1+2400=2401$, one has $g(2400)=80$ and $g(2401)=1372$, which give $\min D_{\psi}=27440$ and $\|\psi\|\ge 20$, and the derivation $(9,-20,-11,20)$ on the primes $2$, $3$, $5$, $7$ attains both minima.
\end{example}

\section{Fermat triples and the presentation by bases}\label{sec:fermat}

Pasten deduced from Conjecture~\ref{conj:sdc} an asymptotic form of Fermat's last theorem by translating a Wronskian proof of the polynomial case~\cite[Propositions~3.10 and~3.11]{Pasten2022}. The presentation framework gives the same exponent bound through the approximation gain of a natural presentation.

\begin{proposition}\label{prop:fermat}
Let $a$, $b$, $c$ be pairwise coprime positive integers with $a^n+b^n=c^n$ for some $n\ge 2$, numbered so that $a<b$, which is possible since $a=b$ would force $c^n=2$. Consider the triple $(a^n,b^n,c^n)$ with the presentation $P=abc$ by its bases. Then $G_a>n/3$. In particular, if the quality of this triple is at most $\kappa$, then $n<3\kappa$.
\end{proposition}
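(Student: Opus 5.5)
We first check that $P=abc$ is a presentation of the triple $(a^n,b^n,c^n)$, and then estimate $\log P$ against $\log c^n$. The integer $abc$ divides $a^nb^nc^n$. Since $\rad(a^n)=\rad(a)$, and likewise for $b$ and $c$, one has $\rad(abc)=\rad(a^nb^nc^n)=R$. Hence $P=abc$ satisfies Definition~\ref{def:presentation}, and
\[
G_a=\frac{\log c^n}{\log(abc)}=\frac{n\log c}{\log a+\log b+\log c}.
\]

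The main step is the estimate $abc<c^3$. From $a^n+b^n=c^n$ with $a,b\ge 1$ one gets $a<c$ and $b<c$, hence $\log(abc)<3\log c$. This already gives $G_a>n/3$ whenever $\log(abc)>0$. If $a=1$, the bound $\log(abc)=\log b+\log c<2\log c<3\log c$ still holds. The degenerate case $abc=1$ cannot occur, since $c>b\ge 1$ forces $c\ge 2$. Also $\log P>0$, because $c\ge2$ gives $P\ge 2$.

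For the final assertion, we use~\eqref{eq:dictionary}, which gives $G_a\le X$ for every presentation. If $X\le\kappa$, then $n/3<G_a\le X\le\kappa$, so $n<3\kappa$.

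I expect no real obstacle. The only care needed is in the presentation check, where we must verify both the divisibility $P\mid a^nb^nc^n$ and the equality of radicals, and in the strictness of $n/3<G_a$, which comes from the strict inequalities $a<c$ and $b<c$.
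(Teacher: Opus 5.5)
Your proof is correct and follows the paper's argument: you check that $abc$ is a presentation, use $a<c$ and $b<c$ to get $\log(abc)<3\log c$, and conclude with $G_a\le X$ from~\eqref{eq:dictionary}. Your separate treatment of $a=1$ is harmless but not needed, since $1+b^n=c^n$ has no solutions in positive integers when $n\ge 2$.
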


\begin{proof}
The integer $abc$ divides $(abc)^n$ and has the same prime factors, so it is a presentation. Since $a<c$ and $b<c$, one has $\log(abc)<3\log c$, and therefore $G_a=n\log c/\log(abc)>n/3$. The last claim follows from $G_a\le X$ in~\eqref{eq:dictionary}.
\end{proof}

Under Conjecture~\ref{conj:sdc}, Propositions~\ref{prop:sdc-gains} and~\ref{prop:fermat} give
\begin{equation}\label{eq:fermat-threshold}
n\le\frac{3}{1-\eta}
\end{equation}
for all but finitely many Fermat triples. Indeed, choose $\varepsilon>0$ so that the interval $(3/(1-\eta),3/(1-\eta)+3\varepsilon)$ contains no integer. Outside a finite set one has $n<3G_a\le 3X<3/(1-\eta)+3\varepsilon$, and therefore $n\le 3/(1-\eta)$.

A single triple is a Fermat triple for finitely many exponents, since its largest entry is at least $2^n$. There is therefore an $n_0$ such that $a^n+b^n=c^n$ has no solution in coprime positive integers for $n\ge n_0$, which is Pasten's conditional form of the asymptotic Fermat theorem~\cite[Proposition~3.11]{Pasten2022}. The $abc$ conjecture gives $n\le 3$ in the same way, outside finitely many Fermat triples. A bound $G_a\le 3/2+\varepsilon$ for the presentation by bases, valid for every $\varepsilon>0$ outside a finite set, gives $n\le 4$, again outside finitely many Fermat triples.

\section{Numerical illustration}\label{sec:data}

For five triples, Table~\ref{tab:minimal} gives the smallest norm $\|\psi\|$ of a derivation $\psi\in\mathcal{T}(a,b)$ with $a$ and $b$ $\psi$-independent, together with one derivation attaining it, the prime weight $\sigma$ and the bound $1+\varepsilon_{\psi}$ of Corollary~\ref{cor:plane-form} for this derivation. Each displayed derivation satisfies additivity in exact integer arithmetic. The minima were found by exhaustive search, and the arguments after the table certify them. The factor $\nu$ equals $2$ for $1+80=81$ and $1$ for the other four triples. The values of $X$, $\sigma$ and the bound are rounded to four decimal places.

\begin{table}[ht]
\centering
\setlength{\tabcolsep}{3.5pt}
\begin{tabular}{llrrrlrr}
\toprule
Triple & $p\mid abc$ & $R$ & $X$ & $\min\|\psi\|$ & $\psi$ & $\sigma$ & $1+\varepsilon_{\psi}$\\
\midrule
$1+80=81$ & $2,3,5$ & $30$ & $1.2920$ & $4$ & $(-3,-4,3)$ & $1.3500$ & $1.2920$\\
$3+125=128$ & $2,3,5$ & $30$ & $1.4266$ & $6$ & $(1,-2,6)$ & $0.9333$ & $1.5065$\\
$1+2400=2401$ & $2,3,5,7$ & $210$ & $1.4557$ & $20$ & $(9,-20,-11,20)$ & $0.5717$ & $1.4557$\\
$1+4374=4375$ & $2,3,5,7$ & $210$ & $1.5679$ & $23$ & $(-23,-4,-23,-17)$ & $0.9431$ & $1.5754$\\
$2+3^{10}\cdot109=23^5$ & $2,3,23,109$ & $15042$ & $1.6299$ & $601$ & $(-601,38,586,79)$ & $0.7174$ & $1.6307$\\
\bottomrule
\end{tabular}

\medskip
\caption{Minimal derivations with $a$ and $b$ $\psi$-independent, and the bound on the quality with the divisibility factor $\nu$ and the prime weights $\sigma$ of~\eqref{eq:prime-weights}. The column $\psi$ gives one minimal derivation through its values $\psi(\xi_p)$ on the primes $p\mid abc$, listed in the same order.}\label{tab:minimal}
\end{table}

For the first and third rows, Proposition~\ref{prop:evaluation-minimum} gives the lower bounds $4$ and $20$ on the norm. For the fourth row, Corollary~\ref{cor:plane-form} gives $\|\psi\|\ge\nu c/(R\sigma)=243/11>22$. For the second row, write $x_p=\psi(\xi_p)$. Additivity reads $448x_2=x_3+75x_5$, and a norm at most $5$ forces $x_2=0$, since $448>5+75\cdot 5$, and then $x_3=x_5=0$.

For the last row, suppose that $\|\psi\|\le 600$ and $W^{\psi}\ne 0$. Lemma~\ref{lem:int-divisibility}, with $\Delta=23^4\cdot 19683$, and the second form in~\eqref{eq:int-wronskian} give
\begin{equation}\label{eq:reyssat-certificate}
W^{\psi}=23^4(10x_{23}-23x_2)\in 23^4\cdot 19683\,\mathbb{Z}\qquad\text{and}\qquad|10x_{23}-23x_2|\le 19800.
\end{equation}
After changing the sign of $\psi$ if necessary, one has $10x_{23}-23x_2=19683$. Reduction modulo $10$ gives $x_2\equiv 9\pmod{10}$, whence $x_2\ge-591$ and $10x_{23}\ge 19683-23\cdot 591=6090$, so that $x_{23}\ge 609$, a contradiction. The displayed derivation therefore has minimal norm $601$.

Every displayed derivation attains $|W^{\psi}|=\Delta$, with values $432$, $1600$, $27440$, $91125$ and $5508110403$. Thus $m_{\psi}=1$ in~\eqref{eq:wronskian-gap}, and every gap comes from the factor $\theta_{\psi}$ of the upper estimate for the Wronskian. The gaps are less than $0.080$, as Figure~\ref{fig:bound-gap} shows.

For $1+80=81$ and $1+2400=2401$, the minimum in~\eqref{eq:prime-weights} is $(c/b)S(ac)$ and $c=q^k$ is a prime power. The form $W^{\psi}=d^{\psi}c=kq^{k-1}\psi(\xi_q)$ then gives $\theta_{\psi}=|\psi(\xi_q)|/\|\psi\|=1$, and together with $m_{\psi}=1$ this gives equality in the quality bound, with $R\,\|\psi\|\,\sigma/\nu$ equal to $30\cdot 4\cdot(27/20)/2=81$ and $210\cdot 20\cdot(343/600)=2401$. For $1+4374=4375$, the corresponding form $d^{\psi}4375=3500\,\psi(\xi_5)+625\,\psi(\xi_7)$ involves two primes, and the minimal derivation has $|\psi(\xi_7)|=17<23=\|\psi\|$. The values $-23$ and $-17$ give $\theta_{\psi}=91125/(23\cdot 4125)=243/253$, whose contribution $\log(1/\theta_{\psi})/\log R$ is the gap of $0.0075$ in Table~\ref{tab:minimal}.

Seen from the side of the norm, the minimum equals the smallest value $\nu c/(R\sigma)$ that Corollary~\ref{cor:plane-form} allows for the first and third triples, and exceeds it by less than five percent for the last two. For Reyssat's triple, independence comes at a high price. The derivation with values $(20,3,46,0)$ on the primes $2$, $3$, $23$, $109$ gives $d^{\phi}a=10a$, $d^{\phi}b=10b$ and $d^{\phi}c=10c$, so that it lies in $\mathcal{T}(a,b)$ with $W^{\phi}=0$ and $\|\phi\|=46$.

No non-zero derivation in $\mathcal{T}(a,b)$ has smaller norm. The certificate~\eqref{eq:reyssat-certificate} shows that every $\phi\in\mathcal{T}(a,b)$ of norm below $601$ has $W^{\phi}=0$. For such $\phi$ the three quotients $d^{\phi}a/a$, $d^{\phi}b/b$ and $d^{\phi}c/c$ coincide, and the equations $10x_{23}=23x_2$ and $10x_3/3+x_{109}/109=x_2/2$ give
\begin{equation}\label{eq:reyssat-dependent-derivations}
(x_2,x_3,x_{23},x_{109})=(10t,3s,23t,545t-1090s)\qquad(s,t\in\mathbb{Z}).
\end{equation}
If the norm is below $46$, then $|t|\le 1$, and the last coordinate is either zero, with $s=t=0$, or at least $545$ in absolute value. The smallest non-zero norm is therefore $46$, attained with $(s,t)=(1,2)$, against $601$ under independence. For this triple, the product bound of~\cite[Theorem~2.6]{Pasten2022} guarantees a non-zero derivation of norm at most $(\omega(abc)\,c\log c/(2\log 2))^{1/3}\approx 663$. Corollary~2.7 of~\cite{Pasten2022} gives the asymptotic estimate $\|\phi\|<c^{1/3+\varepsilon}$ when $r=3$, for every $\varepsilon>0$ outside a finite exceptional set, and its leading scale $c^{1/3}\approx 186$ lies well above the true minimum $46$.

\begin{figure}[ht]
\centering
\begin{tikzpicture}
\begin{axis}[width=0.8\textwidth,height=0.42\textwidth,xmin=1.25,xmax=1.7,ymin=0.5,ymax=5.5,y dir=reverse,ytick={1,2,3,4,5},yticklabels={$1+80=81$,$3+125=128$,$1+2400=2401$,$1+4374=4375$,$2+3^{10}\cdot109=23^5$},xlabel={Quality $X$ and bound $1+\varepsilon_{\psi}$},legend pos=north east,legend cell align=left,legend style={font=\small},tick label style={font=\small}]
\draw[thick,gray] (axis cs:1.4266,2) -- (axis cs:1.5065,2);
\draw[thick,gray] (axis cs:1.5679,4) -- (axis cs:1.5754,4);
\draw[thick,gray] (axis cs:1.6299,5) -- (axis cs:1.6307,5);
\addplot[only marks,mark=*,mark size=2.2pt,lightblue] coordinates {(1.2920,1) (1.4266,2) (1.4557,3) (1.5679,4) (1.6299,5)};
\addlegendentry{$X$}
\addplot[only marks,mark=o,mark size=3.4pt,thick,copper] coordinates {(1.2920,1) (1.5065,2) (1.4557,3) (1.5754,4) (1.6307,5)};
\addlegendentry{$1+\varepsilon_{\psi}$}
\end{axis}
\end{tikzpicture}
\caption{The quality $X$ and the bound $1+\varepsilon_{\psi}$ of Corollary~\ref{cor:plane-form} for the minimal derivations of Table~\ref{tab:minimal}. For $1+80=81$ and $1+2400=2401$ the two values coincide.}\label{fig:bound-gap}
\end{figure}

\FloatBarrier

\bibliographystyle{smfalpha}
\bibliography{pasten_gains}

\end{document}